\documentclass[11pt]{article}

\usepackage[margin=1.05in]{geometry}
\usepackage{amsmath,amssymb,amsthm,mathtools}
\usepackage{xurl}
\usepackage[colorlinks=true,linkcolor=blue,citecolor=blue,urlcolor=blue]{hyperref}
\usepackage{tikz}

\newtheorem{theorem}{Theorem}[section]
\newtheorem{conjecture}[theorem]{Conjecture}
\newtheorem{proposition}[theorem]{Proposition}

\newtheorem{remark}[theorem]{Remark}

\numberwithin{equation}{section}

\newcommand{\Tr}{\operatorname{Tr}}
\newcommand{\diag}{\operatorname{diag}}
\newcommand{\W}{\mathcal W}
\newcommand{\A}{\mathcal A}
\newcommand{\EA}{E_A}

\title{On the Failure of the Upper Bound in the Refined BMV Conjecture and a Pinching Correction}

\author{
Trung Hoa Dinh\\
Department of Mathematics and Statistics\\
Troy University\\
Troy, AL 36082, USA\\
\texttt{thdinh@troy.edu}
}

\date{}

\begin{document}

\maketitle

\begin{abstract}
We analyze why the refined Bessis--Moussa--Villani conjecture fails.
The refined conjecture proposed that the normalized trace average over all
words with prescribed numbers of letters \(A\) and \(B\) should be bounded
above by the clustered word \(\Tr(A^nB^m)\).  Recent counterexamples of Cha
show that this upper bound is false already for \(3\times3\) positive
semidefinite matrices when \(n=m=5\).  We explain the failure from the
viewpoint of commutative common parts.  The term \(\Tr(A^nB^m)\) is not the
canonical common part of the pair \((A,B)\); it is only one clustered word.
After pinching \(B\) relative to \(A\), the natural commuting contribution is
\(\A_{n,m}(A,\EA(B))\).  The off-diagonal complement \(B-\EA(B)\) creates
spectral bridges, and mixed words can distribute the powers of \(A\) along
closed cycles more efficiently than the clustered word.  This gives a
mechanism for finding counterexamples.  Motivated by this mechanism, we
propose a corrected pinching refinement
\[
        \A_{n,m}(A,B)\ge \A_{n,m}(A,\EA(B)).
\]
We prove this corrected conjecture in the case of two letters \(B\), obtaining
a sandwich refinement
\[
        \A_{n,2}(A,\EA(B))
        \le
        \A_{n,2}(A,B)
        \le
        \Tr(A^nB^2).
\]
Thus, even where the old clustered upper bound remains true, the pinching
viewpoint gives a sharper structural decomposition.
\end{abstract}

\noindent\textbf{2020 Mathematics Subject Classification.}
Primary 15A45, 47A63; Secondary 81P45, 94A17.

\medskip

\noindent\textbf{Keywords.}
BMV conjecture; refined BMV conjecture; trace inequalities; positive
semidefinite matrices; pinching; conditional expectation; noncommutative
words; spectral bridges.

\section{Introduction}

The Bessis--Moussa--Villani conjecture originated in quantum statistical
mechanics \cite{BMV1975}.  One formulation asserts that, for Hermitian
matrices \(H\) and positive semidefinite matrices \(B\), the function
\[
        t\mapsto \Tr e^{H-tB}
\]
is the Laplace transform of a positive measure on \([0,\infty)\).
Lieb and Seiringer proved that this analytic statement is equivalent to
the positivity of all coefficients of
\[
        \Tr(A+tB)^m,
        \qquad A,B\ge0,\quad m\in\mathbb N,
\]
and to several other formulations \cite{LiebSeiringer2004}.  Stahl proved
the conjecture in \cite{Stahl2013}; see also Eremenko's exposition
\cite{Eremenko2015}.

A later refinement, attributed to H\"agele and listed as Open Quantum
Problem 40 \cite{OQP40}, asks whether the BMV coefficient is not only
nonnegative but also controlled by two distinguished trace expressions.
Earlier work on word traces, sum-of-squares approaches, and partial
developments around the BMV problem includes
\cite{Burgdorf2008,CollinsDykemaTorres2009,LandweberSpeer2009,JohnsonHillar2002}.

For \(n,m\ge0\), let \(\W_{n,m}\) be the set of all words containing exactly
\(n\) letters \(A\) and \(m\) letters \(B\).  Define
\begin{equation}\label{eq:word-average}
        \A_{n,m}(A,B)
        =
        \frac{1}{\binom{n+m}{n}}
        \sum_{W\in\W_{n,m}}\Tr W(A,B).
\end{equation}
Equivalently, \(\binom{n+m}{n}\A_{n,m}(A,B)\) is the coefficient of
\(t^m\) in \(\Tr(A+tB)^{n+m}\).

The refined conjecture asks whether
\begin{equation}\label{eq:refined-bmv}
        \Tr \exp(n\log A+m\log B)
        \le
        \A_{n,m}(A,B)
        \le
        \Tr(A^nB^m)
\end{equation}
for positive definite matrices \(A,B\), with the positive semidefinite
case obtained by approximation.

The upper bound in \eqref{eq:refined-bmv} was recently disproved by Cha
\cite{Cha2026}.  Cha constructed a one-parameter family of positive
semidefinite \(3\times3\) matrices \((A_x,B_x)\) for which
\[
        \A_{5,5}(A_x,B_x)>\Tr(A_x^5B_x^5)
\]
for small \(x>0\), for instance \(x=10^{-3}\).  Moreover, the ratio $\frac{\A_{5,5}(A_x,B_x)}
             {\Tr(A_x^5B_x^5)}$
can become arbitrarily large as \(x\to0\).

The goal of this paper is to explain why the upper bound in
\eqref{eq:refined-bmv} should fail.  The central point is that
\(\Tr(A^nB^m)\) is not the canonical commutative common part of the word
average.  It is merely one clustered word.  A canonical common part is
obtained by keeping \(A\) fixed and replacing \(B\) by the part visible in
the commutant of \(A\).

Let $A=\sum_{\lambda}\lambda P_\lambda$ 
be the spectral decomposition of \(A\).  We define the pinching of \(B\)
relative to \(A\) by
\begin{equation}\label{eq:pinching}
        \EA(B)=\sum_{\lambda}P_\lambda B P_\lambda .
\end{equation}
Then \(\EA(B)\) commutes with \(A\).  Hence
\[
        \A_{n,m}(A,\EA(B))
        =
        \Tr\bigl(A^n\EA(B)^m\bigr).
\]
This is the part of the word average already explained by the commuting
algebra seen by \(A\).

The refined upper bound compares the word average with
\(\Tr(A^nB^m)\), which mixes the pinched part \(\EA(B)\) and the
off-diagonal complement \(B-\EA(B)\).  Thus it does not subtract the common
direction.  The off-diagonal complement can create positive closed cycles,
which we call spectral bridges.  These bridge contributions may appear in
mixed words with a lower spectral cost than in the clustered word.  This
is the mechanism behind the counterexamples.

The corrected principle suggested here is the following pinching
refinement:
\begin{equation}\label{eq:pinching-conjecture-intro}
        \A_{n,m}(A,B)\ge \A_{n,m}(A,\EA(B)).
\end{equation}
This reverses the sign direction from the failed clustered upper bound.
The averaged word trace should dominate its pinched commuting part, not be
dominated by a single clustered word.

We prove \eqref{eq:pinching-conjecture-intro} in the case \(m=2\).  In
fact, in that case one has the sharper sandwich
\[
        \A_{n,2}(A,\EA(B))
        \le
        \A_{n,2}(A,B)
        \le
        \Tr(A^nB^2).
\]
Thus the old clustered upper bound is still true for two letters \(B\), but
the pinching viewpoint gives a more precise structural decomposition.

\medskip

The paper is organized as follows.  Section~2 explains why the clustered
word \(\Tr(A^nB^m)\) is the wrong common part and introduces the pinched
common part \(\A_{n,m}(A,E_A(B))\).  Section~3 describes the
spectral-bridge mechanism that produces counterexamples to the refined
upper bound.  Section~4 reinterprets Cha's counterexample through this
pinching decomposition.  Section~5 formulates the corrected pinching
conjecture.  Section~6 proves the conjecture for the case of two letters
\(B\), giving a sandwich refinement. The final section discusses the higher-cycle phase obstructions that arise
beyond two letters \(B\).

\section{The refined upper bound and the wrong common part}

The upper part of the refined BMV conjecture is
\begin{equation}\label{eq:old-upper}
        \A_{n,m}(A,B)\le \Tr(A^nB^m).
\end{equation}
Equivalently,
\begin{equation}\label{eq:old-remainder}
        R_{n,m}(A,B)
        :=
        \A_{n,m}(A,B)-\Tr(A^nB^m)
        \le0.
\end{equation}

At first sight \eqref{eq:old-upper} looks natural.  If \(A\) and \(B\)
commute, then every word \(W\in\W_{n,m}\) satisfies
\[
        W(A,B)=A^nB^m.
\]
Hence
\[
        \A_{n,m}(A,B)=\Tr(A^nB^m).
\]
Thus \(\Tr(A^nB^m)\) is the value obtained after collapsing the whole word
ensemble to the commuting case.

However, this also reveals the weakness of \eqref{eq:old-upper}.  The
quantity \(\Tr(A^nB^m)\) is not a canonical noncommutative upper anchor.
It is only one clustered word.  In the noncommutative setting, there is no
structural reason that this word should dominate the average of all words.

A more intrinsic way to extract the commuting part is to keep \(A\) fixed
and pinch \(B\) relative to \(A\).  With \(\EA(B)\) as in
\eqref{eq:pinching}, we have
\[
        A\EA(B)=\EA(B)A.
\]
Therefore
\begin{equation}\label{eq:pinched-average}
        \A_{n,m}(A,\EA(B))
        =
        \Tr\bigl(A^n\EA(B)^m\bigr).
\end{equation}
This is the genuine commuting contribution attached to the pair \((A,B)\).

The decomposition
\begin{equation}\label{eq:B-decomposition}
        B=\EA(B)+B_\perp,
        \qquad
        B_\perp:=B-\EA(B),
\end{equation}
separates the common part from the off-diagonal complement.  The clustered
word \(\Tr(A^nB^m)\), however, does not respect this separation.  Since
\(B^m\) mixes \(\EA(B)\) with \(B_\perp\), the expression $\Tr(A^nB^m)$
already contains noncommutative bridge terms.  It is not the pure
commuting contribution.

The correct decomposition should begin from
\begin{equation}\label{eq:correct-decomp}
        \A_{n,m}(A,B)
        =
        \A_{n,m}(A,\EA(B))
        +
        \Bigl[
        \A_{n,m}(A,B)-\A_{n,m}(A,\EA(B))
        \Bigr].
\end{equation}
The first term is the pinched commuting part.  The second term is the
noncommutative gap.

Thus the natural sign question is not
\[
        \A_{n,m}(A,B)-\Tr(A^nB^m)\le0,
\]
but rather
\begin{equation}\label{eq:new-gap}
        \A_{n,m}(A,B)-\A_{n,m}(A,\EA(B))\ge0.
\end{equation}
In this form the direction is reversed.  The average should dominate its
pinched common part, not be dominated by a clustered word.

This explains why the refined upper bound is structurally unstable.  It
uses the wrong object as the common part.

\section{Spectral bridges and the mechanism for counterexamples}

We now make the preceding discussion more concrete.  Diagonalize \(A\):
\[
        A=\diag(a_1,\ldots,a_d),
        \qquad a_i\ge0.
\]
Write
\[
        B=D+N,
        \qquad D=\EA(B),
        \qquad N=B-\EA(B).
\]
In this basis, \(D\) is block diagonal along the eigenspaces of \(A\), while
\(N\) contains transitions between distinct eigenspaces.

The clustered word is $ \Tr(A^nB^m).$ 
Expanding in the eigenbasis of \(A\), a closed path
\[
        i_0\to i_1\to\cdots\to i_m=i_0
\]
contributes
\begin{equation}\label{eq:clustered-path}
        a_{i_0}^{\,n}
        b_{i_0i_1}b_{i_1i_2}\cdots b_{i_{m-1}i_0}.
\end{equation}
Thus the full \(A^n\)-weight is paid at a single vertex.

By contrast, a general cyclic representative of a word in the average has
the form
\[
        A^{r_1}BA^{r_2}B\cdots A^{r_m}B,
        \qquad
        r_1+\cdots+r_m=n.
\]
A closed path
\[
        i_1\to i_2\to\cdots\to i_m\to i_1
\]
then contributes
\begin{equation}\label{eq:mixed-path}
        a_{i_1}^{r_1}a_{i_2}^{r_2}\cdots a_{i_m}^{r_m}
        b_{i_1i_2}b_{i_2i_3}\cdots b_{i_mi_1}.
\end{equation}
The powers of \(A\) can now be distributed around the cycle.

This difference is the source of the failure of the upper bound.  The
clustered word forces the entire \(A\)-weight onto one vertex, whereas the
word average allows the \(A\)-weight to be distributed among several
vertices.  If \(A\) has a highly anisotropic spectrum, this can change the
order of magnitude of the trace.

Thus a counterexample to \eqref{eq:old-upper} can be sought by the
following mechanism.

\medskip

\noindent
\textbf{Step 1. Choose a highly anisotropic spectrum.}
Take
\[
        A=\diag(a_1,\ldots,a_d),
        \qquad
        a_1\gg a_2\gg\cdots\gg a_d\ge0.
\]
Then the order of a bridge contribution is extremely sensitive to where the
powers of \(A\) are placed.  In small-parameter examples, concentrating many
powers on a small spectral vertex can strongly suppress the clustered word,
whereas mixed words may move some of the \(A\)-powers to larger spectral
vertices and produce a larger contribution.

\medskip

\noindent
\textbf{Step 2. Make the pinched part small on the large eigenspace.}
Write
\[
        B=D+N,
        \qquad
        D=\EA(B),
        \qquad
        N=B-\EA(B).
\]
Choose \(B\) so that the block of \(D\) on the large eigenspace of \(A\)
is small.  Then the pinched common part
\[
        \A_{n,m}(A,D)
\]
is small along the dominant spectral direction.

\medskip

\noindent
\textbf{Step 3. Create off-diagonal bridges.}
Choose the complement \(N=B-\EA(B)\) so that it has off-diagonal entries
connecting different eigenspaces of \(A\).  These entries create closed
cycles such as
\[
        i_1\to i_2\to\cdots\to i_m\to i_1,
\]
whose contribution contains products of the form
\[
        n_{i_1i_2}n_{i_2i_3}\cdots n_{i_mi_1}.
\]
If the real part of such a product is positive, the complement produces
a positive spectral bridge.

\medskip

\noindent
\textbf{Step 4. Compare clustered and mixed spectral costs.}
The clustered word $ \Tr(A^nB^m)$ 
forces a path contribution of the form
\[
        a_{i_0}^{\,n}
        b_{i_0i_1}b_{i_1i_2}\cdots b_{i_{m-1}i_0}.
\]
Thus all \(A^n\)-weight is paid at a single vertex.

By contrast, a mixed word
\[
        A^{r_1}BA^{r_2}B\cdots A^{r_m}B,
        \qquad
        r_1+\cdots+r_m=n,
\]
has path contribution
\[
        a_{i_1}^{r_1}a_{i_2}^{r_2}\cdots a_{i_m}^{r_m}
        b_{i_1i_2}b_{i_2i_3}\cdots b_{i_mi_1}.
\]
Here the \(A\)-weight is distributed along the cycle.  For a highly
anisotropic spectrum, this distributed configuration may have lower
spectral cost than the clustered word.

\medskip

Consequently, if the off-diagonal bridges have positive contribution and
some mixed words have lower spectral cost than the clustered word, then
the bridge-gain may dominate:
\[
        \A_{n,m}(A,B)>\Tr(A^nB^m).
\]
This gives a counterexample to the refined upper bound.

This mechanism is consistent with Cha's analysis.  Cha shows that the
leading behavior of the counterexample is governed not by a crude number
of alternations, but by a weighted shortest-bridge cost on cyclic run
decompositions \cite{Cha2026}.  In the language used here, the
off-diagonal complement \(B-\EA(B)\) creates spectral bridges whose
contribution survives averaging.

\begin{figure}[t]
\centering

\begin{tikzpicture}[>=stealth,scale=1.05]


\node at (-3.3,2.2) {\large Clustered word};

\filldraw[fill=blue!12,draw=blue!60,thick]
(-4,0) circle (0.22);

\filldraw[fill=gray!10,draw=black!50,thick]
(-2.2,1.4) circle (0.22);

\filldraw[fill=gray!10,draw=black!50,thick]
(-1.8,-1.2) circle (0.22);

\node at (-4,-0.55) {$i_0$};
\node at (-2.2,1.95) {$i_1$};
\node at (-1.8,-1.75) {$i_2$};

\draw[->,thick]
(-3.8,0.15)--(-2.45,1.18);

\draw[->,thick]
(-2.05,1.15)--(-1.95,-0.95);

\draw[->,thick]
(-2.05,-1.1)--(-3.75,-0.1);

\node[blue!70!black] at (-4,0.65) {$A^n$};

\node at (-2.9,-2.4)
{\small all spectral weight concentrated};


\node at (3.2,2.2) {\large Mixed word};

\filldraw[fill=blue!12,draw=blue!60,thick]
(1.8,0) circle (0.22);

\filldraw[fill=blue!12,draw=blue!60,thick]
(4.2,1.4) circle (0.22);

\filldraw[fill=blue!12,draw=blue!60,thick]
(4.6,-1.2) circle (0.22);

\node at (1.8,-0.55) {$i_1$};
\node at (4.2,1.95) {$i_2$};
\node at (4.6,-1.75) {$i_3$};

\draw[->,thick]
(2.0,0.15)--(4.0,1.18);

\draw[->,thick]
(4.25,1.15)--(4.55,-0.95);

\draw[->,thick]
(4.35,-1.1)--(2.05,-0.1);

\node[blue!70!black] at (1.25,0.7) {$A^{r_1}$};
\node[blue!70!black] at (4.9,0.3) {$A^{r_2}$};
\node[blue!70!black] at (3.2,-1.55) {$A^{r_3}$};

\node at (3.2,-2.4)
{\small spectral weight distributed along the cycle};


\node[align=center]
at (0,-3.7)
{
\small
Clustered words force the entire \(A\)-weight onto one spectral vertex,\\
while mixed words distribute the spectral cost along closed bridges.
};

\end{tikzpicture}

\caption{
Spectral-bridge mechanism behind the failure of the refined upper bound.
The clustered word \(\Tr(A^nB^m)\) concentrates the full \(A^n\)-weight at one
vertex, whereas mixed words distribute the powers of \(A\) along a closed
cycle.  For highly anisotropic spectra, the distributed configuration may
have substantially lower spectral cost.
}
\label{fig:spectral-bridge}
\end{figure}

\section{Cha's counterexample through pinching}

Cha's counterexample uses the family
\begin{equation}\label{eq:cha-family}
A_x=
\begin{pmatrix}
1&0&0\\
0&x&-x\\
0&-x&x
\end{pmatrix},
\qquad
B_x=
\begin{pmatrix}
x&-x&0\\
-x&x&0\\
0&0&1
\end{pmatrix},
\qquad x>0.
\end{equation}
Both matrices are positive semidefinite.  Their eigenvalues are
\[
        0,\quad 2x,\quad 1.
\]
Thus \(A_x\) is strongly anisotropic when \(x\) is small.

In the eigenbasis of \(A_x\), the pinching of \(B_x\) is
\begin{equation}\label{eq:pinching-cha}
        E_{A_x}(B_x)
        =
        \diag\left(
        x,\frac{1+x}{2},\frac{1+x}{2}
        \right),
\end{equation}
while the eigenvalues of \(A_x\) are
\[
        1,\quad 2x,\quad 0.
\]
Therefore
\begin{equation}\label{eq:cha-pinched-value}
        \A_{5,5}(A_x,E_{A_x}(B_x))
        =
        \Tr\bigl(A_x^5E_{A_x}(B_x)^5\bigr)
        =
        x^5\bigl(1+(1+x)^5\bigr).
\end{equation}
At \(x=10^{-3}\),
\begin{equation}\label{eq:cha-pinched-numeric}
        \A_{5,5}(A_x,E_{A_x}(B_x))
        \approx
        2.0050100100\times10^{-15}.
\end{equation}

On the other hand,
\begin{equation}\label{eq:cha-clustered}
        \Tr(A_x^5B_x^5)
        =
        32x^5+256x^{10}.
\end{equation}
Hence, at \(x=10^{-3}\),
\begin{equation}\label{eq:cha-clustered-numeric}
        \Tr(A_x^5B_x^5)
        \approx
        3.2000000000\times10^{-14}.
\end{equation}

A direct expansion gives the exact formula
\begin{equation}\label{eq:cha-average-exact}
        \A_{5,5}(A_x,B_x)
        =
        \frac{x^4}{126}
        \bigl(
        5+1422x+1675x^2+3130x^3
        +4875x^4+5930x^5+4881x^6
        \bigr).
\end{equation}
Hence, at \(x=10^{-3}\),
\begin{equation}\label{eq:cha-average-numeric}
        \A_{5,5}(A_x,B_x)
        \approx
        5.0981572499\times10^{-14}.
\end{equation}
Moreover,
\[
        \A_{5,5}(A_x,B_x)\sim \frac{5}{126}x^4,
        \qquad
        \Tr(A_x^5B_x^5)\sim 32x^5,
\]
so
\[
        \frac{\A_{5,5}(A_x,B_x)}{\Tr(A_x^5B_x^5)}
        \sim
        \frac{5}{4032}\frac1x
        \qquad (x\to0).
\]
Consequently,
\begin{equation}\label{eq:cha-ordering}
        \A_{5,5}(A_x,E_{A_x}(B_x))
        <
        \Tr(A_x^5B_x^5)
        <
        \A_{5,5}(A_x,B_x)
\end{equation}
for sufficiently small \(x>0\), and in particular for \(x=10^{-3}\).

Thus Cha's example destroys the clustered upper bound but does not
contradict the pinching principle.  Rather, it supports the idea that the
clustered word is not the correct common part.  The pinched term records
the commuting contribution.  The off-diagonal complement
\[
        B_x-E_{A_x}(B_x)
\]
creates spectral bridges, and the mixed words capture more of this
bridge-gain than the clustered word does.

\section{The corrected pinching conjecture}

The previous sections suggest that the correct refinement should not be an
upper bound by a clustered word.  It should be a lower bound by the pinched
commuting part.

\begin{conjecture}[Pinching refinement]\label{conj:pinching}
Let \(A,B\ge0\).  Then, for all \(n,m\ge0\),
\begin{equation}\label{eq:pinching-refinement}
        \A_{n,m}(A,B)
        \ge
        \A_{n,m}(A,\EA(B)).
\end{equation}
Equivalently,
\begin{equation}\label{eq:pinching-refinement-equivalent}
        \A_{n,m}(A,B)
        \ge
        \Tr\bigl(A^n\EA(B)^m\bigr).
\end{equation}
\end{conjecture}

Since \(A\) commutes with \(\EA(B)\), the right-hand side is nonnegative.
Therefore Conjecture \ref{conj:pinching} implies the coefficient positivity
part of the BMV theorem:
\[
        \A_{n,m}(A,B)\ge0.
\]

The conceptual difference from the failed upper bound is important.  The
failed inequality compares the word average with one clustered word.  The
pinching conjecture compares the word average with the part already visible
in the commutative algebra generated by \(A\).

\section{\texorpdfstring{The case of two \(B\)'s: a sandwich refinement}
{The case of two B's: a sandwich refinement}}

Before reaching the range where the clustered upper bound fails, it is
useful to examine the first nontrivial case, namely the case of two letters
\(B\).  In this case the old clustered upper bound is still valid, but the
pinching viewpoint gives a sharper structural statement.

For \(n\ge0\), define
\begin{equation}\label{eq:h-n}
        h_n(x,y)=\sum_{r=0}^{n}x^r y^{n-r}.
\end{equation}

\begin{proposition}[Sandwich for two \(B\)'s]\label{prop:sandwich-two-B}
Let \(A,B\ge0\).  Then, for every \(n\ge0\),
\begin{equation}\label{eq:sandwich-two-B}
        \A_{n,2}(A,\EA(B))
        \le
        \A_{n,2}(A,B)
        \le
        \Tr(A^nB^2).
\end{equation}
\end{proposition}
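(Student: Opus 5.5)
Our plan is to diagonalize \(A=\diag(a_1,\ldots,a_d)\) with \(a_i\ge0\) and write every word trace in \(\W_{n,2}\) in coordinates. Up to cyclic rotation, every word with two letters \(B\) has the form \(A^rBA^{n-r}B\) with \(0\le r\le n\). The \(n+2\) cyclic rotations of the words in \(\W_{n,2}\) give all \(\binom{n+2}{2}\) words. Counting multiplicities, we get
\[
        \binom{n+2}{2}\A_{n,2}(A,B)=\frac{n+2}{2}\sum_{r=0}^{n}\Tr(A^rBA^{n-r}B).
\]
Since \(\binom{n+2}{2}=\frac{(n+2)(n+1)}{2}\), this reads
\[
        \A_{n,2}(A,B)=\frac{1}{n+1}\sum_{r=0}^n\Tr(A^rBA^{n-r}B).
\]
In the eigenbasis of \(A\) we have \(\Tr(A^rBA^{n-r}B)=\sum_{i,j}a_i^r a_j^{n-r}|b_{ij}|^2\), using \(b_{ji}=\overline{b_{ij}}\). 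Summing over \(r\) and using \eqref{eq:h-n} gives the key formula
\[
        \A_{n,2}(A,B)=\frac{1}{n+1}\sum_{i,j}h_n(a_i,a_j)\,|b_{ij}|^2 .
\]

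For the lower bound, apply the same formula with \(B\) replaced by \(\EA(B)\). The entries of \(\EA(B)\) are exactly the entries \(b_{ij}\) with \(a_i=a_j\), and all other entries vanish. If \(a_i=a_j\), then \(h_n(a_i,a_j)=(n+1)a_i^n\), so these terms contribute identically to both sides. The remaining pairs with \(a_i\ne a_j\) contribute \(\frac{1}{n+1}h_n(a_i,a_j)|b_{ij}|^2\ge0\) to \(\A_{n,2}(A,B)\) only. This proves the left inequality. As a consistency check, \(\A_{n,2}(A,\EA(B))=\Tr(A^n\EA(B)^2)\) matches \eqref{eq:pinched-average}.

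For the upper bound, write \(\Tr(A^nB^2)=\sum_{i,j}a_i^n|b_{ij}|^2\). Symmetrizing in \((i,j)\), which is allowed because \(|b_{ij}|=|b_{ji}|\), gives
\[
        \Tr(A^nB^2)=\sum_{i,j}\tfrac12(a_i^n+a_j^n)|b_{ij}|^2 .
\]
The \(h_n\)-formula is already symmetric. It therefore suffices to prove the scalar inequality
\[
        \frac{h_n(x,y)}{n+1}\le \frac{x^n+y^n}{2},\qquad x,y\ge0 .
\]
This follows by pairing the terms \(r\) and \(n-r\): we have \(x^ry^{n-r}+x^{n-r}y^r\le x^n+y^n\), since
\[
        x^n+y^n-x^ry^{n-r}-x^{n-r}y^r=(x^r-y^r)(x^{n-r}-y^{n-r})\ge0 .
\]
Both factors have the same sign, so the product is nonnegative. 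Summing over \(r\) gives \(2h_n(x,y)\le (n+1)(x^n+y^n)\), which is the required inequality.

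The only step needing care is the reduction to cyclic representatives with correct multiplicities. When \(n\) is even and \(r=n/2\), the word \(A^{n/2}BA^{n/2}B\) has a smaller rotation orbit. We would avoid this bookkeeping by working directly with the coefficient of \(t^2\) in \(\Tr(A+tB)^{n+2}\). This coefficient equals \(\frac{n+2}{2}\sum_{r}\Tr(BA^rBA^{n-r})\): choose the position of the first \(B\) in one of \(n+2\) ways, use cyclicity, and divide by \(2\) for the ordered choice of the two \(B\) positions. Both inequalities then reduce to the entrywise, manifestly nonnegative comparisons described above. The positive semidefinite case needs no approximation, since \(a_i\ge0\) throughout.
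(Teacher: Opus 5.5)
Your proof is correct and follows the paper's route. You diagonalize $A$, use cyclicity to reduce to $\A_{n,2}(A,B)=\frac{1}{n+1}\sum_{i,j}h_n(a_i,a_j)|b_{ij}|^2$, drop the nonnegative off-block terms for the lower bound, and symmetrize in $(i,j)$ for the upper bound. The differences are minor: you prove $2h_n(x,y)\le(n+1)(x^n+y^n)$ via $(x^r-y^r)(x^{n-r}-y^{n-r})\ge0$ instead of weighted AM--GM, and your ordered-pair count (like your observation that repeated eigenvalues need no special basis) handles the multiplicities more transparently than the paper's wording.
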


\begin{proof}
By unitary invariance, assume first that
\[
        A=\diag(a_1,\ldots,a_d),
        \qquad a_i\ge0,
\]
and write \(B=(b_{ij})\).

Taking cyclicity of the trace into account, the full sum over all words with
\(n\) letters \(A\) and two letters \(B\) satisfies
\[
        \sum_{W\in\W_{n,2}}\Tr W(A,B)
        =
        \frac{n+2}{2}
        \sum_{r=0}^{n}\Tr(A^rBA^{n-r}B).
\]
Indeed, the two gaps between the two occurrences of \(B\) are \(r\) and
\(n-r\).  The two orientations are represented by the terms \(r\) and
\(n-r\), and the symmetric case \(r=n-r\) gives the same factor by cyclic
periodicity.  Hence
\begin{align}
        \A_{n,2}(A,B)
        &=
        \frac{1}{\binom{n+2}{2}}
        \frac{n+2}{2}
        \sum_{r=0}^{n}\Tr(A^rBA^{n-r}B)                         \notag\\
        &=
        \frac{1}{n+1}
        \sum_{i,j=1}^{d}
        h_n(a_i,a_j)|b_{ij}|^2 .                                \label{eq:A-n2-expansion}
\end{align}

Since \(\EA(B)\) keeps only the blocks of \(B\) inside the eigenspaces of
\(A\), the difference
\[
        \A_{n,2}(A,B)-\A_{n,2}(A,\EA(B))
\]
is precisely the contribution of the off-block entries:
\begin{equation}\label{eq:positive-gap-two-B}
        \A_{n,2}(A,B)-\A_{n,2}(A,\EA(B))
        =
        \frac{1}{n+1}
        \sum_{a_i\ne a_j}
        h_n(a_i,a_j)|b_{ij}|^2
        \ge0.
\end{equation}
This proves the lower bound in \eqref{eq:sandwich-two-B}.

For the upper bound, note that
\begin{equation}\label{eq:clustered-n2}
        \Tr(A^nB^2)
        =
        \sum_{i,j=1}^{d}a_i^n|b_{ij}|^2.
\end{equation}
Pairing the terms \((i,j)\) and \((j,i)\), it is enough to prove
\begin{equation}\label{eq:scalar-upper-two-B}
        \frac{2}{n+1}h_n(a_i,a_j)
        \le
        a_i^n+a_j^n.
\end{equation}
For \(n=0\), this is equality.  For \(n\ge1\), the weighted AM--GM
inequality gives
\[
        a_i^r a_j^{n-r}
        \le
        \frac{r}{n}a_i^n+\frac{n-r}{n}a_j^n,
        \qquad r=0,\ldots,n.
\]
Summing over \(r\) gives
\[
        \frac{1}{n+1}h_n(a_i,a_j)
        \le
        \frac{a_i^n+a_j^n}{2}.
\]
Thus \eqref{eq:scalar-upper-two-B} holds, and hence
\[
        \A_{n,2}(A,B)\le \Tr(A^nB^2).
\]

If \(A\) has repeated eigenvalues, choose a basis inside each eigenspace of
\(A\) that diagonalizes the compression \(P_\lambda B P_\lambda\).  The
same computation applies, with off-block entries corresponding to distinct
eigenvalues.  This completes the proof.
\end{proof}

\begin{remark}
Proposition \ref{prop:sandwich-two-B} shows that the pinching refinement is
not merely a repair after the failure of the clustered upper bound.  Even
in a range where the old clustered upper bound remains true, the pinching
viewpoint gives more information.  It identifies the exact commuting
contribution and gives the positive square gap
\[
        \frac{1}{n+1}
        \sum_{a_i\ne a_j}
        h_n(a_i,a_j)|b_{ij}|^2 .
\]
Thus the old bound is an upper estimate, while the pinching refinement is a
structural decomposition.
\end{remark}

\section{Discussion}

The original BMV theorem is a positivity theorem for the total coefficient
of \(\Tr(A+tB)^m\).  The refined upper bound \eqref{eq:old-upper} attempted
to strengthen this by comparing the full word average with a single
clustered word.  Cha's counterexamples show that this word-level domination
is false \cite{Cha2026}.

The failure is structural.  The clustered word is not the common part of
the word average.  The canonical common part, relative to \(A\), is obtained
by pinching \(B\):
\[
        B=\EA(B)+\bigl(B-\EA(B)\bigr).
\]
The part \(\EA(B)\) gives the commuting contribution.  The off-diagonal
part \(B-\EA(B)\) creates spectral bridges between eigenspaces of \(A\).
Mixed words can distribute the powers of \(A\) along these bridges and may
therefore have lower spectral cost than the clustered word.

The case of two letters \(B\) shows why this phenomenon is not visible at
the first nontrivial level.  In that case the expansion contains only edge
terms of the form $ |b_{ij}|^2.$ 
No cyclic phase obstruction is present, and this is why both the pinching
lower bound and the old clustered upper bound can be proved directly in
Proposition \ref{prop:sandwich-two-B}.

Starting from three letters \(B\), however, closed cycles appear.  For
example, words with three letters \(B\) contain terms of the form
\[
        b_{ij}b_{j\ell}b_{\ell i}.
\]
The real part of such a product can have either sign.  These higher-cycle
terms are precisely the spectral-bridge effects that are absent in the
two-\(B\) case.  Thus the general pinching conjecture requires controlling
cyclic phase contributions by the positivity of \(B\).

The corrected viewpoint is therefore to subtract the pinched common part
instead of comparing the word average with one clustered word.  The failed
upper bound asks for
\[
        \A_{n,m}(A,B)-\Tr(A^nB^m)\le0,
\]
which is not structurally natural.  The pinching refinement asks instead
for
\[
        \A_{n,m}(A,B)-\A_{n,m}(A,\EA(B))\ge0.
\]
This is a genuine noncommutative gap above the commuting component visible
from \(A\).  In this sense, the counterexamples to the refined upper bound
do not merely disprove a proposed inequality; they point to the correct
object that should be subtracted.



\begin{thebibliography}{99}

\bibitem{BMV1975}
D. Bessis, P. Moussa, and M. Villani,
Monotonic converging variational approximations to the functional integrals
in quantum statistical mechanics,
\emph{Journal of Mathematical Physics} \textbf{16} (1975), 2318--2325.

\bibitem{LiebSeiringer2004}
E. H. Lieb and R. Seiringer,
Equivalent forms of the Bessis--Moussa--Villani conjecture,
\emph{Journal of Statistical Physics} \textbf{115} (2004), 185--190.

\bibitem{Stahl2013}
H. R. Stahl,
Proof of the BMV conjecture,
\emph{Acta Mathematica} \textbf{211} (2013), 255--290.

\bibitem{Eremenko2015}
A. Eremenko,
Herbert Stahl's proof of the BMV conjecture,
\emph{Sbornik: Mathematics} \textbf{206} (2015), 87--92.

\bibitem{OQP40}
IQOQI Vienna,
Open Quantum Problem 40: Refinement of the Bessis--Moussa--Villani conjecture.
Available at
\url{https://oqp.iqoqi.oeaw.ac.at/refinement-of-the-bessis-moussa-villani-conjecture}.

\bibitem{Cha2026}
H. Cha,
One-parameter counterexamples to the refined Bessis--Moussa--Villani conjecture,
arXiv:2603.19927, 2026.
Available at
\url{https://arxiv.org/abs/2603.19927}.

\bibitem{Burgdorf2008}
S. Burgdorf,
Sums of Hermitian squares as an approach to the BMV conjecture,
arXiv:0802.1153.

\bibitem{CollinsDykemaTorres2009}
B. Collins, K. J. Dykema, and F. Torres-Ayala,
Sum-of-squares results for polynomials related to the
Bessis--Moussa--Villani conjecture,
arXiv:0905.0420.

\bibitem{LandweberSpeer2009}
P. S. Landweber and E. R. Speer,
On D. H\"agele's approach to the Bessis--Moussa--Villani conjecture,
\emph{Linear Algebra and its Applications} \textbf{431} (2009), 1317--1324.

\bibitem{JohnsonHillar2002}
C. R. Johnson and C. J. Hillar,
Eigenvalues of words in two positive definite letters,
\emph{SIAM Journal on Matrix Analysis and Applications} \textbf{23} (2002),
916--928.

\end{thebibliography}
\end{document}